\documentclass[10pt, leqno]{article}

\pagestyle{plain}

\usepackage{mathrsfs}
\usepackage{amsmath}
\usepackage{amssymb}
\usepackage{amsthm}
\usepackage{comment}

\usepackage{accents}

\newtheorem{definition}{Definition}[section]

\newtheorem{prop}{Proposition}[section]
\newtheorem{lem}{Lemma}[section]
\newtheorem{cor}{Corollary}[section]

%

%
%
\numberwithin{equation}{section}

\def\ep{\epsilon}

\begin{document}

\title{Notes on
symmetrization by B\'ezoutiant\\
\vspace{2mm}\normalsize{ In memoria di Enrico Jannelli}}

\author{Tatsuo Nishitani\footnote{Department of Mathematics, Osaka University, Machikaneyama 1-1, Toyonaka 560-0043, Japan:  
nishitani@math.sci.osaka-u.ac.jp
}}

\date{}
\maketitle

\def\dif{\partial}
\def\al{\alpha}
\def\be{\beta}
\def\ga{\gamma}
\def\om{\omega}
\def\lam{\lambda}
\def\tika{{\tilde \kappa}}
\def\baka{{\bar \kappa}}
\def\varep{\varepsilon}
\def\tal{{\tilde\alpha}}
\def\tbe{{\tilde\beta}}
\def\tis{{\tilde s}}
\def\bas{{\bar s}}
\def\R{{\mathbb R}}
\def\N{{\mathbb N}}
\def\C{{\mathbb C}}
\def\Q{{\mathbb Q}}
\def\Ga{\Gamma}
\def\La{\Lambda}
\def\lr#1{\langle{#1}\rangle}
\def\mD{\lr{ D}_{\mu}}
\def\xim{\lr{\xi}_{\mu}}

\begin{abstract}Let $p$ and $q$ be a monic hyperbolic polynomials such that $q$ separates $p$  and let $H$ be the B\'ezoutian (form) of $p$ and $q$. Then $H$ is nonnegative definite and symmetrizes the Sylvester matrix associated with $p$. We give a simple proof of this fact and at the same time prove that the family of B\'ezoutian  of Nuij approximation of $p$ and $p'$  gives  quasi-symmetrizers introduced by S.Spagnolo. A relation connecting $H$ with the symmetrizer which was used by J.Leray for strictly hyperbolic polynomials is also given.
\end{abstract}

\smallskip
 {\footnotesize Keywords: Symmetrizer, B\'ezoutiant, energy form, Nuij approximation, quasi-symmetrizer.}
 
 \smallskip
 {\footnotesize Mathematics Subject Classification 2010: Primary 15B99, 35L80; Secondary 15B57}
 
 \section{B\'ezoutian as energy form}
 
We recall well known facts about polynomials whose roots separate the roots of other polynomials and energy forms obtained using these polynomials.  Let $p(\zeta)=\zeta^m+a_1\zeta^{m-1}+\cdots+a_m$ be a monic polynomial of degree $m$ and consider the  differential operator
\[
p(D_t)u=\sum_{j=0}^ma_{m-j}D^j_tu,\qquad D_t=\frac{1}{i}\frac{d}{dt}
\]
where $p(\zeta)$ is assumed to be hyperbolic, that is all the roots are real so that one can write $
p(\zeta)=\prod_{j=1}^m(\zeta-\lambda_k)$ with  $\lambda_k\in\R$.
For a polynomial  in $(\zeta, {\bar\zeta})$ 
\[
 h(\zeta,\bar\zeta)=\sum_{i,j=0}^{m-1}h_{ij}\zeta^i{\bar\zeta}^{j},\quad h_{ij}\in \C
\]
 we associate a differential quadratic form
$$
\hat h(Du,\overline{Du})=\sum_{i,j=0}^{m-1}h_{ij}D^i_tu\cdot\overline{D^j_tu}
$$
where $Du=(u,D_tu,\ldots,D_t^{m-1}u)$. It is easy to see that
$$
D_t \hat h(Du,\overline{Du})=\sum_{i,j=0}^{m-1}h_{ij}(D_t^{i+1}u\cdot\overline{D_t^ju}-
D_t^{i}u\cdot\overline{D_t^{j+1}u})=\hat g(Du,\overline{Du})
$$
where $\hat g(z,\bar z)$ is the quadratic form associated to $g(\zeta,{\bar\zeta})=(\zeta-{\bar\zeta})h(\zeta,{\bar\zeta})$. For a polynomial $p(\zeta)$
associate a linear form in $z=(z_0,z_1,\ldots,z_m)\in\C^{m+1}$ by
$$
\hat p(z)=\sum_{j=0}^ma_jz_j
$$
such that $\hat p(Du)=p(D_t)u$. It is clear that
\begin{lem} 
\label{lem:threeone} If $h(\zeta,{\bar\zeta})=p(\zeta)q({\bar\zeta})$ with real polynomials $p$ and $q$ then one has $
\hat h(z,\bar z)=\hat p(z)\hat q(\bar z)$. 
\end{lem}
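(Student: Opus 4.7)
The plan is to verify the identity by a direct expansion of coefficients, exploiting the bilinearity of the map $h \mapsto \hat h$ in the coefficients of $h$.

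First I would write $p(\zeta)=\sum_{i=0}^{m-1}p_i\zeta^i$ and $q(\zeta)=\sum_{j=0}^{m-1}q_j\zeta^j$ with $p_i,q_j\in\R$ (after adjusting the degrees so the product lives inside the ring of polynomials of bidegree at most $(m-1,m-1)$, which is where $h$ is defined). Then multiplying out
\[
h(\zeta,\bar\zeta)=p(\zeta)q(\bar\zeta)=\sum_{i,j=0}^{m-1}p_iq_j\,\zeta^i\bar\zeta^j,
\]
so by comparing with the definition $h(\zeta,\bar\zeta)=\sum h_{ij}\zeta^i\bar\zeta^j$ one reads off $h_{ij}=p_iq_j$.

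Substituting this factorization of $h_{ij}$ into the definition of $\hat h$ gives
\[
\hat h(z,\bar z)=\sum_{i,j=0}^{m-1}h_{ij}z_i\bar z_j=\sum_{i,j=0}^{m-1}p_iq_jz_i\bar z_j=\Bigl(\sum_{i=0}^{m-1}p_iz_i\Bigr)\Bigl(\sum_{j=0}^{m-1}q_j\bar z_j\Bigr),
\]
which is precisely $\hat p(z)\hat q(\bar z)$, using that the coefficients $q_j$ are real so that $\hat q(\bar z)=\sum q_j\bar z_j$ without any conjugation of the coefficients. The only real point to watch is this reality of the coefficients of $q$, which is exactly the hypothesis; apart from that the statement is a tautological consequence of the fact that $h \mapsto \hat h$ is bilinear in the coefficient matrix $(h_{ij})$ and that the coefficient matrix of a product factorizes as an outer product. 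There is no genuine obstacle.
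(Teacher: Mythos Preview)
The paper gives no proof at all---it introduces the lemma with ``It is clear that'' and moves on---so your direct coefficient expansion is precisely the natural verification being left to the reader. Your argument is correct; the only cosmetic point is that the degree bound need not be fixed at $m-1$ (in the application to $g=(\zeta-\bar\zeta)h$ the polynomial $p$ has degree $m$), but you already flagged this and the computation is unchanged.
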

To relate $\hat g(Du,\overline{Du})$ to $p(D_t)$ 
assume 
\begin{equation}
g(\zeta,{\bar\zeta})=p(\zeta)q({\bar\zeta})+p({\bar\zeta})r(\zeta)
\label{eq:eqthreetwo}
\end{equation}
with some {\it real} polynomials $q$ and $r$. Then by Lemma \ref{lem:threeone} we obtain
\begin{equation}
\label{eq:dif:h}
\frac{d}{dt}\hat h(Du,\overline{Du})=i\big( p(D_t)u\cdot\overline{q(D_t)u}+\overline{p(D_t)u}
\cdot r(D_t)u \big).
\end{equation}
From \eqref{eq:eqthreetwo} it follows $
(\zeta-{\bar\zeta})h(\zeta,{\bar\zeta})=p(\zeta)q({\bar\zeta})+p({\bar\zeta})r(\zeta)$. 
Taking $\zeta={\bar\zeta}$ one sees that $r(\zeta)=-q(\zeta)$ and hence
$$
h(\zeta,{\bar\zeta})=\frac{p(\zeta)q({\bar\zeta})-p({\bar\zeta})q(\zeta)}{\zeta-{\bar\zeta}}
$$
which is called the B\'ezout form of $p$ and $q$, or B\'ezoutian of $p$ and $q$ (see for example \cite{RS}).
Denote by $p_k(\zeta)$ the monic polynomial of degree $m-1$ of which roots  are $\lambda_j$, $1\leq j\leq m$, $j\neq k$, that is
\begin{equation}
\label{eq:psita:k}
p_k(\zeta)=\prod_{j\neq k}^m(\zeta-\lambda_j).
\end{equation}
A hyperbolic polynomial $p$ is called {\it strictly hyperbolic} if all the roots $\lambda_k$ are different from each other. 
\begin{lem} 
\label{lem:threetwo} Assume that $p$ is a strictly hyperbolic polynomial and $q$ is a real 
polynomial of degree at most $m-1$. If there 
exists $c>0$ such that
\begin{equation}
{\hat h}_{p,q}(z,{\bar z})\geq c\sum_{k=1}^m\big|{\hat p}_k(z)\big|^2
\label{eq:threeseven}
\end{equation}
then $q(\zeta)$ is a hyperbolic polynomial  with positive coefficient of 
$\zeta^{m-1}$ and separates $p(\zeta)$, that is the zeros $\{\mu_k\}$ 
of $q(\zeta)$ verify
\begin{equation}
\label{eq:def:bunri}
\lambda_1<\mu_1<\lambda_2<\cdots <\lambda_{m-1}<\mu_{m-1}<\lambda_m.
\end{equation}
Conversely if $q(\zeta)$ is a hyperbolic polynomial of degree $m-1$ with positive coefficients of $\zeta^{m-1}$ which separates $p$ then \eqref{eq:threeseven} holds with some $c>0$.
\end{lem}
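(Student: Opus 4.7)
The plan is to reduce the inequality \eqref{eq:threeseven} to a statement about the coefficients in a canonical expansion of $h_{p,q}$ in the basis $\{p_k(\zeta)p_k(\bar\zeta)\}$. The first step is to derive the Lagrange-type identity
\[
h_{p,q}(\zeta,\bar\zeta)=\sum_{k=1}^m\frac{q(\lambda_k)}{p'(\lambda_k)}\,p_k(\zeta)p_k(\bar\zeta).
\]
This follows by inserting the Lagrange interpolation formula $q(\zeta)=\sum_{k}\frac{q(\lambda_k)}{p'(\lambda_k)}p_k(\zeta)$ (valid since $\deg q\le m-1$ and $p$ is strictly hyperbolic, so $p'(\lambda_k)\ne 0$) into the numerator of the B\'ezoutian, and then telescoping via $p(\zeta)p_k(\bar\zeta)-p(\bar\zeta)p_k(\zeta)=(\zeta-\bar\zeta)p_k(\zeta)p_k(\bar\zeta)$, which holds because $p(\zeta)=(\zeta-\lambda_k)p_k(\zeta)$. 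Applying Lemma \ref{lem:threeone} termwise (the $p_k$ are real) turns the identity into
\[
\hat h_{p,q}(z,\bar z)=\sum_{k=1}^m\frac{q(\lambda_k)}{p'(\lambda_k)}\,|\hat p_k(z)|^2.
\]

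The second step is to note that the linear forms $\hat p_1,\dots,\hat p_m$ on $\C^m$ are linearly independent, because the polynomials $p_1,\dots,p_m$ themselves form a basis of the space of polynomials of degree $\le m-1$ (evaluation at the $\lambda_k$ yields a diagonal matrix with nonzero entries $p'(\lambda_k)$). Consequently one may pick, for each $k$, a vector $z^{(k)}\in\C^m$ with $\hat p_j(z^{(k)})=\delta_{jk}$.

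With these two ingredients the proof splits into the two implications. For the converse, I would compute signs: since $\lambda_1<\cdots<\lambda_m$, $p'(\lambda_k)=\prod_{j\ne k}(\lambda_k-\lambda_j)$ has sign $(-1)^{m-k}$; and if $q$ has positive leading coefficient with roots interlacing the $\lambda_k$ as in \eqref{eq:def:bunri}, then $q(\lambda_k)$ also has sign $(-1)^{m-k}$. Hence every ratio $q(\lambda_k)/p'(\lambda_k)$ is strictly positive, and taking $c=\min_k q(\lambda_k)/p'(\lambda_k)>0$ yields \eqref{eq:threeseven} directly from the above expansion.

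For the forward direction, evaluating \eqref{eq:threeseven} at $z^{(k)}$ yields $q(\lambda_k)/p'(\lambda_k)\ge c>0$; in particular $q(\lambda_k)\ne 0$ and $\mathrm{sign}\,q(\lambda_k)=(-1)^{m-k}$. The intermediate value theorem then produces a root of $q$ in each open interval $(\lambda_k,\lambda_{k+1})$, $k=1,\dots,m-1$, furnishing $m-1$ distinct real roots; since $\deg q\le m-1$ these exhaust the roots, so $\deg q=m-1$ and \eqref{eq:def:bunri} holds. Finally, $q(\lambda_m)>0$ together with the absence of roots of $q$ above $\mu_{m-1}<\lambda_m$ forces the leading coefficient of $q$ to be positive. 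The only substantive step is the Lagrange-type identity above; the rest is bookkeeping in signs and a linear-independence argument.
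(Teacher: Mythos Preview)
Your argument is correct. The converse direction (separation implies \eqref{eq:threeseven}) is essentially identical to the paper's: both rely on the Lagrange expansion
\[
\hat h_{p,q}(z,\bar z)=\sum_{k=1}^m\alpha_k\,|\hat p_k(z)|^2,\qquad \alpha_k=\frac{q(\lambda_k)}{p'(\lambda_k)},
\]
together with the sign check $\alpha_k>0$.

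The forward direction, however, genuinely differs. You exploit the linear independence of the forms $\hat p_1,\dots,\hat p_m$ on $\C^m$: evaluating the inequality at a dual vector $z^{(k)}$ reads off $\alpha_k\ge c>0$ directly, and then a one-line sign/IVT argument finishes. The paper instead restricts to the diagonal $z=(1,\zeta,\dots,\zeta^{m-1})$, uses the Wronskian identity $h_{p,q}(\zeta,\zeta)=p'(\zeta)q(\zeta)-p(\zeta)q'(\zeta)$, and then carries out a longer case analysis ruling out zeros of $q$ in $(-\infty,\lambda_{1})$, in $(\lambda_{m},\infty)$, double zeros in $(\lambda_k,\lambda_{k+1})$, and excess multiplicity at the $\lambda_k$. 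Your route is shorter and more transparent in the strictly hyperbolic setting; the trade-off is that it does not extend to the general hyperbolic case of Lemma~\ref{lem:fourone}, since when $p$ has repeated roots the forms $\hat p_k$ are no longer independent and no dual basis exists, whereas the paper's Wronskian argument carries over verbatim.
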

\begin{cor} 
\label{cor:threethree} Assume that $p(\zeta)$ is a strictly hyperbolic polynomial
 and $q(\zeta)$ is a hyperbolic polynomial of degree $m-1$ with positive coefficient of $\zeta^{m-1}$ which 
separates $p(\zeta)$. Let $r(\zeta)$ 
be a polynomial of degree $m-1$. Then there is a $C>0$ such that
$$
C\hat h_{p,q}(z,\bar z)\geq |\hat r(z)|^2,\quad z\in \C^m.
$$
\end{cor}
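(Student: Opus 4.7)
The plan is to deduce the estimate from the lower bound provided by the converse part of Lemma \ref{lem:threetwo}, together with a linear algebra observation that $\{p_k\}_{k=1}^m$ spans the space of polynomials of degree at most $m-1$.

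First I would invoke Lemma \ref{lem:threetwo}: since $q$ is hyperbolic of degree $m-1$ with positive leading coefficient and separates $p$, there exists $c>0$ such that
\[
\hat h_{p,q}(z,\bar z)\geq c\sum_{k=1}^m \big|\hat p_k(z)\big|^2,\qquad z\in\C^m.
\]
It therefore suffices to show that any polynomial $r$ of degree at most $m-1$ satisfies $|\hat r(z)|^2\leq C'\sum_{k=1}^m |\hat p_k(z)|^2$ for some $C'>0$.

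Next I would verify that $p_1,\ldots,p_m$ form a basis of the $m$-dimensional space of polynomials of degree at most $m-1$. Indeed, since $p$ is strictly hyperbolic, $p_k(\lambda_j)=0$ for $j\neq k$ while $p_k(\lambda_k)=\prod_{j\neq k}(\lambda_k-\lambda_j)\neq 0$. Evaluating any linear relation $\sum_k \alpha_k p_k\equiv 0$ at $\lambda_j$ forces $\alpha_j=0$, so the $p_k$ are linearly independent, hence a basis. Consequently there exist constants $c_1,\ldots,c_m\in\C$ with $r=\sum_{k=1}^m c_k p_k$, and linearity of the map $p\mapsto\hat p$ gives $\hat r(z)=\sum_{k=1}^m c_k\hat p_k(z)$. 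Applying the Cauchy--Schwarz inequality yields
\[
|\hat r(z)|^2 \leq \Big(\sum_{k=1}^m |c_k|^2\Big)\Big(\sum_{k=1}^m |\hat p_k(z)|^2\Big),
\]
and combining this with the Lemma's lower bound produces the desired $C$.

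I do not expect serious obstacles here, since the work is already done by Lemma \ref{lem:threetwo}; the one point to be careful about is that strict hyperbolicity of $p$ is precisely what makes $\{p_k\}$ a basis (if $p$ had multiple roots, the $p_k$ would be linearly dependent and the argument would break), which is why the hypothesis of strict hyperbolicity is carried over from the Lemma into the Corollary.
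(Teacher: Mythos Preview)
Your argument is correct and is precisely the intended one: the paper states the corollary without proof because it is an immediate consequence of the lower bound in Lemma~\ref{lem:threetwo} together with the fact that, under strict hyperbolicity, the $p_k$ span the polynomials of degree at most $m-1$. Your Cauchy--Schwarz step and the basis verification via evaluation at the $\lambda_j$ are exactly what is needed.
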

The B\'ezoutiant of $p$ and $p'$ provides a positive definite energy form and plays a fundamental role in studying strictly hyperbolic equations in \cite{Ga} (see also  \cite{Leray}, \cite{Sa}).

Next we study general monic hyperbolic polynomials. Let 
$$
p(\zeta)=\prod_{j=1}^s(\zeta-\lambda_{(j)})^{r_j},\quad \lambda_{(j)}\in\R,\quad 
\sum_{j=1}^sr_j=m
$$
where $\lambda_{(j)}$ are different from each other. We also write the same $p(\zeta)$ 
as $
p(\zeta)=\prod_{j=1}^m(\zeta-\lambda_j)$
so that $\{\lambda_1,\ldots,\lambda_m\}=\{\lambda_{(1)},\ldots,\lambda_{(1)},\lambda_{(2)},\ldots,\lambda_{(2)},\ldots\}$ and $p_k(\zeta)$ is still defined by \eqref{eq:psita:k}.
Condition \eqref{eq:def:bunri}  could be generalized  to
\begin{definition} 
\rm
\label{dnfourone} Let $q(\zeta)$ be a hyperbolic polynomial of degree $m-1$. Then 
$q(\zeta)$ separates $p(\zeta)$ if $q$ has the form
$$
q(\zeta)=c\prod_{j=1}^s(\zeta-\lambda_{(j)})^{r_j-1}\prod_{j=1}^{s-1}(\zeta-\mu_j),\quad
 (c\neq 0)
$$
where $\lambda_{(1)}<\mu_1<\lambda_{(2)}<\cdots<\mu_{s-1}<\lambda_{(s)}$.
\end{definition}
\begin{lem} 
\label{lem:fourone} Assume that $q(\zeta)$ is a real polynomial of degree $m-1$ with positive coefficient of 
$\zeta^{m-1}$ and separates $p(\zeta)$. Then there exists $c>0$ such that
\begin{equation}
\label{eq:bunri}
{\hat h}_{p,q}(z,{\bar z})\geq c\,\sum_{k=1}^m|{\hat p}_k(z)|^2,\quad z\in \C^m.
\end{equation}
Conversely if \eqref{eq:bunri} is satisfied for a real polynomial $q$ of degree $m-1$ then $q$ is hyperbolic with positive coefficients of $\zeta^{m-1}$ and separates $p$.
\end{lem}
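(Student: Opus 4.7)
\emph{Forward direction.}
Let $\tilde p(\zeta)=\prod_{j=1}^s(\zeta-\lambda_{(j)})$ be the squarefree part and $d(\zeta)=p(\zeta)/\tilde p(\zeta)$. The hypothesis on $q$ is precisely that $q=d\tilde q$ with $\tilde q$ of degree $s-1$, positive leading coefficient, and simple roots strictly interlacing those of $\tilde p$. Setting $P_j(\zeta):=p(\zeta)/(\zeta-\lambda_{(j)})$, the simple-root partial fraction of $\tilde q/\tilde p$ gives
\[
q(\zeta)=\sum_{j=1}^s c_j P_j(\zeta),\qquad c_j=\frac{\tilde q(\lambda_{(j)})}{\tilde p'(\lambda_{(j)})}>0,
\]
the positivity coming from the sign alternation of both $\tilde q$ and $\tilde p'$ at the interlaced points. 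A one-line cancellation using $p=(\zeta-\lambda_{(j)})P_j$ then gives $h_{p,q}(\zeta,\bar\zeta)=\sum_j c_j P_j(\zeta)P_j(\bar\zeta)$, so by Lemma~\ref{lem:threeone}, $\hat h_{p,q}(z,\bar z)=\sum_j c_j|\hat P_j(z)|^2$. Since $P_j=p_k$ for each of the $r_j$ indices $k$ with $\lambda_k=\lambda_{(j)}$, we have $\sum_{k=1}^m|\hat p_k|^2=\sum_j r_j|\hat P_j|^2$, so $c:=\min_j c_j/r_j>0$ yields \eqref{eq:bunri}.

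\emph{Converse, reduction to a matrix inequality.}
Assume \eqref{eq:bunri} with $c>0$, and expand $q/p$ fully into real partial fractions
\[
q(\zeta)=\sum_{j=1}^s\sum_{\ell=1}^{r_j}c_{j,\ell}\phi_{j,\ell}(\zeta),\qquad \phi_{j,\ell}(\zeta):=\frac{p(\zeta)}{(\zeta-\lambda_{(j)})^\ell}.
\]
Telescoping $(\zeta-\lambda_{(j)})^{-\ell}-(\bar\zeta-\lambda_{(j)})^{-\ell}$ exactly as in the forward computation yields
\[
h_{p,q}(\zeta,\bar\zeta)=\sum_{j=1}^s\sum_{\substack{i,\ell\ge 1\\ i+\ell-1\le r_j}}c_{j,i+\ell-1}\,\phi_{j,i}(\zeta)\,\phi_{j,\ell}(\bar\zeta).
\]
The polynomials $\{\phi_{j,\ell}\}$, $m$ of them in total, are a Hermite interpolation basis of polynomials of degree $\le m-1$ (they have strictly decreasing orders of vanishing at each $\lambda_{(j)}$), so the linear forms $\{\hat\phi_{j,\ell}\}$ are a basis of $(\C^m)^*$. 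Since also $\sum_k|\hat p_k|^2=\sum_j r_j|\hat\phi_{j,1}|^2$, the inequality \eqref{eq:bunri} is equivalent, for each $j$ separately, to positive semidefiniteness of the real symmetric matrix $A^{(j)}-cr_j E_{11}$, where $A^{(j)}$ is the $r_j\times r_j$ Hankel-type block with entries $c_{j,i+\ell-1}$ on and above the antidiagonal and $0$ below.

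\emph{Converse, PSD cascade and conclusion.}
The crux — and the main obstacle — is a short cascading argument. For $r_j\ge 2$ the $(r_j,r_j)$ entry of $A^{(j)}-cr_jE_{11}$ vanishes, so positive semidefiniteness forces the whole last row and column to vanish, killing $c_{j,r_j}$; the new $(r_j-1,r_j-1)$ entry then vanishes, killing $c_{j,r_j-1}$, and so on until only $c_{j,1}$ survives, with $c_{j,1}\ge cr_j>0$. Hence $q=\sum_j c_{j,1}P_j=d\tilde q$ with $\tilde q:=\sum_j c_{j,1}\tilde p_j$, where $\tilde p_j(\zeta)=\prod_{k\neq j}(\zeta-\lambda_{(k)})$. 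Evaluating at the simple roots gives $\tilde q(\lambda_{(j)})=c_{j,1}\tilde p'(\lambda_{(j)})$; since each $c_{j,1}>0$ and $\tilde p'(\lambda_{(j)})$ alternates sign along the ordered $\lambda_{(1)}<\cdots<\lambda_{(s)}$, the values $\tilde q(\lambda_{(j)})$ alternate in sign, so $\tilde q$ has the $s-1$ required real roots strictly interlacing the $\lambda_{(j)}$; its leading coefficient $\sum_j c_{j,1}$ is positive. This is exactly the form demanded by Definition~\ref{dnfourone}, with positive coefficient of $\zeta^{m-1}$.
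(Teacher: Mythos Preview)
Your forward direction is essentially identical to the paper's: both factor $p=d\tilde p$ and $q=d\tilde q$, expand $\tilde q$ over the $\tilde p_j$ via Lagrange interpolation to get $q=\sum_j c_j P_j$ with $c_j>0$, compute $h_{p,q}=\sum_j c_j P_j(\zeta)P_j(\bar\zeta)$, and take $c=\min_j c_j/r_j$. Your $P_j$ is the paper's $\phi_j$, your $c_j$ its $\alpha_j$.

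Your converse, however, is correct but takes a genuinely different route. The paper restricts to the diagonal and uses the identity $h_{p,q}(\zeta,\zeta)=p'(\zeta)q(\zeta)-p(\zeta)q'(\zeta)$ together with $h_{p,q}(\zeta,\zeta)\ge c\sum_k|p_k(\zeta)|^2$, then argues by sign-tracking on the real line: no root of $q$ left of $\lambda_{(1)}$ or right of $\lambda_{(s)}$, at most one simple root in each gap, and a local order-of-vanishing comparison at each $\lambda_{(j)}$ to rule out excess multiplicity. You instead stay with the full Hermitian form: writing $q=\sum_{j,\ell}c_{j,\ell}\phi_{j,\ell}$ in the Hermite basis $\phi_{j,\ell}=p/(\zeta-\lambda_{(j)})^\ell$, the Bézoutian becomes block-diagonal with upper-triangular Hankel blocks $A^{(j)}$, and the positivity hypothesis forces $A^{(j)}-cr_jE_{11}\succeq 0$; the cascade (zero diagonal entry $\Rightarrow$ zero row/column) then kills $c_{j,r_j},c_{j,r_j-1},\dots,c_{j,2}$ in turn and leaves $c_{j,1}\ge cr_j>0$. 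The cascade is sound: at each step the new bottom-right diagonal entry is either structurally zero ($2(r_j-t)-1>r_j$) or lies among the coefficients already annihilated. What your approach buys is a unified treatment---the same decomposition serves both directions, and you recover the partial-fraction coefficients $c_{j,1}$ directly, from which interlacing follows by the same sign-alternation used in the forward step. What the paper's approach buys is that it avoids the Hermite basis and the PSD linear algebra entirely, using only one-variable calculus; it is closer in spirit to classical Hermite--Hurwitz arguments.
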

\begin{lem} 
\label{lem:fourfour} Let $p'=\dif p/\dif \zeta$. Then there exists $c>0$ such that
$$
{\hat h}_{p,p'}(z,\bar z)=\sum_{k=1}^m|{\hat p}_k(z)|^2\geq c\,|{\hat p'}(z)|^2.
$$
\end{lem}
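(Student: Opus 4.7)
The plan is to establish the identity first and then deduce the inequality by Cauchy--Schwarz.

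For the equality $\hat h_{p,p'}(z,\bar z) = \sum_{k=1}^m |\hat p_k(z)|^2$, I would work at the polynomial level in $(\zeta,\bar\zeta)$ and then invoke Lemma \ref{lem:threeone} together with the linearity of the map $h\mapsto \hat h$. Concretely, the target identity becomes
\[
\frac{p(\zeta)p'(\bar\zeta)-p(\bar\zeta)p'(\zeta)}{\zeta-\bar\zeta}=\sum_{k=1}^m p_k(\zeta)p_k(\bar\zeta).
\]
Since the $p_k$ are real polynomials, Lemma \ref{lem:threeone} then gives $\widehat{p_k(\zeta)p_k(\bar\zeta)}(z,\bar z)=\hat p_k(z)\overline{\hat p_k(z)}=|\hat p_k(z)|^2$ after using that $\hat p_k(\bar z)=\overline{\hat p_k(z)}$ for a real polynomial.

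The polynomial identity is the main computation. Starting from $p'(\zeta)=\sum_{k=1}^m p_k(\zeta)$ I would write
\[
p(\zeta)p'(\bar\zeta)-p(\bar\zeta)p'(\zeta)=\sum_{k=1}^m\bigl(p(\zeta)p_k(\bar\zeta)-p(\bar\zeta)p_k(\zeta)\bigr)
\]
and then use $p(\zeta)=(\zeta-\lambda_k)p_k(\zeta)$ (valid for every $k$, including repeated roots, with $p_k$ defined by \eqref{eq:psita:k}) to see that each summand factors as $(\zeta-\bar\zeta)\,p_k(\zeta)p_k(\bar\zeta)$. Dividing by $\zeta-\bar\zeta$ yields the claim. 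I don't expect any obstacle here; the factor $(\zeta-\lambda_k)$ cancels cleanly for each $k$ independently, so the argument is insensitive to multiplicities.

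For the inequality $\sum_{k=1}^m |\hat p_k(z)|^2\geq c\,|\hat p'(z)|^2$, the identity $p'=\sum_k p_k$ passes under the hat to $\hat p'(z)=\sum_{k=1}^m \hat p_k(z)$. Applying the Cauchy--Schwarz inequality in $\C^m$ then gives
\[
|\hat p'(z)|^2=\Bigl|\sum_{k=1}^m \hat p_k(z)\Bigr|^2\leq m\sum_{k=1}^m|\hat p_k(z)|^2,
\]
so the statement holds with $c=1/m$. This last step is entirely formal, so the whole proof amounts to the single polynomial manipulation above.
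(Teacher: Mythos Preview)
Your proof is correct and follows essentially the same route as the paper: establish the polynomial identity $h_{p,p'}(\zeta,\bar\zeta)=\sum_k p_k(\zeta)p_k(\bar\zeta)$ and then apply Cauchy--Schwarz to $\hat p'=\sum_k \hat p_k$. The paper phrases the identity as $(\partial_{\bar\zeta}-\partial_\zeta)\big[p(\zeta)p(\bar\zeta)\big]$ evaluated via the product rule, and carries out the Cauchy--Schwarz step after grouping equal roots as $p'=\sum_{k=1}^s r_k\phi_k$, but these are the same computations in slightly different dress and yield the same constant $c=1/m$.
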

\begin{cor} 
\label{cor:fourthree} Let $p(\zeta)$ be a hyperbolic polynomial. Then $p'(\zeta)$ 
separates $p(\zeta)$.
\end{cor}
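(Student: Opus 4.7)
The plan is to read off Corollary \ref{cor:fourthree} as an immediate consequence of Lemma \ref{lem:fourfour} combined with the converse direction of Lemma \ref{lem:fourone}. Since $p$ is monic real of degree $m$, the polynomial $p'(\zeta) = m\zeta^{m-1}+\cdots$ is a real polynomial of degree $m-1$ with positive coefficient of $\zeta^{m-1}$, so $q = p'$ is an admissible choice in the hypothesis of the converse part of Lemma \ref{lem:fourone}.

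Next, Lemma \ref{lem:fourfour} furnishes the identity
$$
\hat h_{p,p'}(z,\bar z)=\sum_{k=1}^m|\hat p_k(z)|^2,
$$
which trivially implies the inequality \eqref{eq:bunri} with $q=p'$ and constant $c=1$. By the converse assertion of Lemma \ref{lem:fourone}, any real $q$ of degree $m-1$ for which \eqref{eq:bunri} holds must be hyperbolic, have positive leading coefficient, and separate $p$ in the sense of Definition \ref{dnfourone}. Applying this to $q=p'$ yields the corollary.

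The main conceptual hurdle has already been dispatched in Lemma \ref{lem:fourfour} (the pointwise identity for the B\'ezoutian of $p$ and $p'$) and in the converse half of Lemma \ref{lem:fourone} (extracting the separation structure from a lower bound on the B\'ezout form). No additional analysis is needed; in particular one does not need to appeal to Rolle's theorem directly, since the separation property of $p'$ with respect to $p$ in the sense of Definition \ref{dnfourone} is encoded by the lower bound one already has for free.
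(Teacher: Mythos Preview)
Your proof is correct and is precisely the argument the paper intends: Corollary~\ref{cor:fourthree} is stated immediately after Lemma~\ref{lem:fourfour} without further comment because the identity ${\hat h}_{p,p'}(z,\bar z)=\sum_{k=1}^m|{\hat p}_k(z)|^2$ is exactly \eqref{eq:bunri} with $q=p'$ and $c=1$, and the converse clause of Lemma~\ref{lem:fourone} then forces $p'$ to separate $p$.
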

For the sake of completeness we give proofs of  Lemmas \ref{lem:fourone} and  \ref{lem:fourfour} in the last section.

Denoting $p^{(j)}=\dif^j p/\dif \zeta^j$ it follows from \eqref{eq:dif:h} that
\begin{equation}
\label{eq:hD:dif}
\begin{split}
\frac{d}{dt}{\hat h}_{p^{(j)},p^{(j+1)}}(Du,\overline{Du})=-2{\mathsf{Im}}\,\big(p^{(j)}(D_t)u,\overline{p^{(j+1)}(D_t)u}\big)\\
\leq 2\,\big|p^{(j)}(D_t)u\big|\big|p^{(j+1)}(D_t)u\big|.
\end{split}
\end{equation}
Since $p^{(j+1)}$ separates $p^{(j)}$ in view of Corollary \ref{cor:fourthree}, from Lemma  \ref{lem:fourfour} one has $c_j\big|p^{(j+1)}(D_t)u\big|\leq {\hat h}_{p^{(j)},p^{(j+1)}}(Du,\overline{Du})$ with $c_j>0$. 
Assume that the multiplicity of the roots of $p(\zeta)$ is at most $r$ then $p^{(r-1)}$ is a strictly hyperbolic polynomial and hence $|u(t)|^2\leq C_r{\hat h}_{p^{(r-1)},p^{(r)}}(Du,\overline{Du})$ by Corollary \ref{cor:threethree}. Thus
\[
\gamma^{2r}\int_{-\infty}^te^{-2\gamma s}|u(s)|^2ds\leq C\int_{-\infty}^te^{-2\gamma s}\big|p(D_t)u\big|^2ds.
\]
A family of  energy forms defined by B\'ezoutiant of $p$ and $q$, where $q$ is taken not only to be $p'$ but also  perturbations of $p'$,  plays an  important role in studying even weakly hyperbolic equations in \cite{Iv}, \cite{N1}.

\section{Symmetrization by B\'ezoutiant}

With $U={^t}(u,D_tu,\ldots,D_t^{m-1}u)$ the equation $p(D_t)u=0$ is reduced to $D_tU=AU$ where $A$ is called the Sylvester matrix associated to 
$p(\zeta)$;
\[
A=\left(\begin{array}{ccccc}
0&1&\cdots&0\\
\vdots&\vdots&\ddots\\
0&0&\cdots&1\\
-a_m&-a_{m-1}&\cdots&-a_1
\end{array}\right).
\]
Let $h(\zeta,\bar\zeta)=\sum_{i,j=0}^{m-1}h_{ij}\zeta^i{\bar\zeta}^j$ be the B\'ezoutian of $p$ and $q$ then we call conveniently  the matrix $H=(h_{ij})$  the B\'ezout matrix of $p$ and $q$. 
\begin{prop}
\label{pro:hyp:sym}
Assume that $p$ is a monic hyperbolic polynomial and $q$ separates $p$. Let $H$ be the B\'ezout matrix of $p$ and $q$. Then $H$ is nonnegative definite and symmetrizes $A$, 
that is $HA$ is symmetric and ${\rm det }\,H$  
is the resultant of $p$ and $q$. 
\end{prop}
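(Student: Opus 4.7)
The plan is to establish the three assertions---$H \succeq 0$, $HA$ symmetric, and $\det H = \operatorname{Res}(p,q)$---in turn. Nonnegativity is immediate from Lemma~\ref{lem:fourone}: under the separation hypothesis, $\hat h_{p,q}(z,\bar z) \geq c\sum_k|\hat p_k(z)|^2 \geq 0$ for all $z\in\C^m$, which is the positive semidefiniteness of $H$.

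For symmetry of $HA$, I would compare two expressions for $\tfrac{d}{dt}\hat h_{p,q}(Du,\overline{Du})$ computed for an \emph{arbitrary} smooth $u$. By \eqref{eq:dif:h} (with $r = -q$), one expression is $-2\,\mathsf{Im}(p(D_t)u\cdot\overline{q(D_t)u})$. On the other hand, with $U = {}^t(u,D_tu,\ldots,D_t^{m-1}u)$, the structure of the Sylvester matrix gives $D_t U = AU + \mathbf{e}\,p(D_t)u$, where $\mathbf{e}$ denotes the last standard basis vector; direct differentiation of $\hat h_{p,q}(Du,\overline{Du}) = U^t H \bar U$ then yields $iU^t(A^t H - HA)\bar U$ plus boundary terms involving $p(D_t)u$. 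Restricting to $u$ with $p(D_t)u = 0$, for which the Cauchy data $U(0)$ ranges over all of $\C^m$, the boundary terms vanish on both sides, leaving $U^t(A^t H - HA)\bar U = 0$ for every $U \in \C^m$. This forces $A^t H = HA$, and combined with $H = H^t$ it yields $(HA)^t = A^t H^t = A^t H = HA$.

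For the determinant, I would first dispatch the strictly hyperbolic case and then extend by continuity. In that case the right eigenvectors of $A$ are the Vandermonde vectors $v_k = {}^t(1,\lambda_k,\ldots,\lambda_k^{m-1})$, and evaluating $v_k^t H v_\ell$ amounts to evaluating $h_{p,q}(\zeta,\eta)$ at $(\lambda_k,\lambda_\ell)$ (as seen by plugging $u(t) = e^{i\lambda t}$ into the definition of $\hat h$). The B\'ezoutian formula gives $h_{p,q}(\lambda_k,\lambda_\ell) = 0$ for $k\neq\ell$ (both $p(\lambda_k)$ and $p(\lambda_\ell)$ vanish) and $h_{p,q}(\lambda_k,\lambda_k) = p'(\lambda_k) q(\lambda_k)$ by l'H\^opital. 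Hence with $V = [v_1\mid\cdots\mid v_m]$ the Vandermonde matrix, $V^t H V = \operatorname{diag}(p'(\lambda_k) q(\lambda_k))$; taking determinants and using the classical identity $\prod_k p'(\lambda_k) = (-1)^{m(m-1)/2}(\det V)^2$ collapses to $\det H = (-1)^{m(m-1)/2}\prod_k q(\lambda_k) = \operatorname{Res}(p,q)$ (up to the standard sign convention for the resultant). The general hyperbolic case follows by approximating $p$ by strictly hyperbolic polynomials and passing to the limit, since $\det H$ and $\operatorname{Res}(p,q)$ are both polynomial in the coefficients.

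The main obstacle, I expect, is the $HA$-symmetry step: the energy-form identity must be organized so that the $p(D_t)u$ boundary terms decouple cleanly from the desired bilinear identity in $U$. The determinant argument reduces to Vandermonde bookkeeping once the diagonalization $V^t H V$ is written down, and nonnegativity is handed to us by Lemma~\ref{lem:fourone}.
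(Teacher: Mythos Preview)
Your proposal is correct and takes a genuinely different route from the paper on the symmetrization step. For $H\succeq 0$ both you and the paper invoke Lemma~\ref{lem:fourone}. For $HA$ symmetric, the paper first restricts to strictly hyperbolic $p$, uses Lagrange interpolation to write $q=\sum_k\alpha_k p_k$ with $\alpha_k=q(\lambda_k)/p'(\lambda_k)$, and builds an explicit factorization $H={}^tG\Lambda G$ where $G$ is (up to diagonal scaling) the inverse Vandermonde matrix $R^{-1}$; since $R^{-1}AR$ is diagonal, so is $GAG^{-1}$, and symmetry of $HA$ follows. Your energy-identity argument is more direct: it is a pointwise algebraic identity in the jet of $u$, requires no hyperbolicity or separation hypothesis, and handles the general case without approximation. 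The paper's factorization is not wasted, though---the identity $H_\ep={}^tG_\ep G_\ep$ (the case $q=p'$, $\alpha_k\equiv1$) is exactly what drives the quasi-symmetrizer estimates in Proposition~\ref{pro:quasi:gene}. For $\det H$, your congruence $V^tHV=\operatorname{diag}\bigl(p'(\lambda_k)q(\lambda_k)\bigr)$ is the same diagonalization the paper obtains, just read off directly from the B\'ezoutian rather than through the cofactor matrix; your sign caveat is apt, since $\prod_j p'(\lambda_j)=(-1)^{m(m-1)/2}\Delta^2$ introduces the same factor in the paper's computation. For the passage to general hyperbolic $p$ the paper invokes the Nuij approximation (Lemma~\ref{lem:pq:kinji}), but as you note any strictly hyperbolic approximation---or simply Zariski density, since both sides are polynomial in the coefficients---suffices.
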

\begin{proof} The fact that $H$ is nonnegative definite is obvious from Lemma \ref{lem:fourone} because
\begin{equation}
\label{eq:H:pk}
(Hz,z)={\hat h}_{p,q}(z,\bar z)\geq c\,\sum_{k=1}^m\big|{\hat p}_k(z)\big|^2.
\end{equation}
Next show that $H$ symmetrizes $A$. First treat the case when $p(\zeta)$ is a strictly hyperbolic polynomial. By the Lagrange interpolation formula, one can write
\begin{equation}
\label{eq:hokan}
q(\zeta)=\sum_{k=1}^m\frac{q(\lambda_k)\,p(\zeta)}{p'(\lambda_k)\,(\zeta-\lambda_k)}=\sum_{k=1}^m\al_k\,p_k(\zeta),\quad \al_k=\frac{q(\lambda_k)}{p'(\lambda_k)}.
\end{equation}
Then one has
\begin{eqnarray*}
p(\zeta)q({\bar\zeta})-p({\bar\zeta})q(\zeta)=\sum_{k=1}^m\al_k\,p_k(\zeta)p_k({\bar\zeta})(\zeta-\lambda_k)
-\sum_{k=1}^m\al_k\,p_k({\bar\zeta})p_k(\zeta)({\bar\zeta}-\lambda_k)\\
=(\zeta-{\bar\zeta})\sum_{k=1}^m\al_k\,p_k(\zeta)p_k({\bar\zeta})
=(\zeta-{\bar\zeta})\sum_{k=1}^m\al_k\,|p_k(\zeta)|^2
\end{eqnarray*}
which gives $
h_{p,q}(\zeta,{\bar\zeta})=\sum_{k=1}^m\al_k|p_k(\zeta)|^2$ and hence
\begin{equation}
\label{eq:Lagra}
{\hat h}_{p,q}(z,{\bar z})=\sum_{k=1}^m\al_k|{\hat p}_k(z)|^2
\end{equation}
where $\alpha_k>0$ by \eqref{eq:H:pk}.

Denote the  elementary symmetric polynomials in $(\lambda_1,\ldots,\lambda_{k-1},\lambda_{k+1},\ldots,\lambda_m)$ by
\[
\sigma_{\ell,k}=\sum_{1\leq j_1<\cdots<j_{\ell}\leq m, j_p\neq k}\lambda_{j_1}\cdots\lambda_{j_{\ell}},\quad \sigma_{0,k}=1,\;\; \ell=0,1,\ldots,m-1.
\]
Since ${\hat p}_k(z)=\sum_{i=1}^m(-1)^{m-i}\sigma_{m-i,k}z_{i-1}$ it follows from \eqref{eq:H:pk} and \eqref{eq:Lagra} that
 \begin{equation}
 \label{eq:hij}
 h_{ij}=\sum_{k=1}^m(-1)^{i+j}\alpha_k\,\sigma_{m-i,k}\sigma_{m-j,k}.
 \end{equation}
Denoting by $R$ the Vandermonde's matrix;
 \[
 R=\left(\begin{array}{ccccc}
1&1&\cdots&1\\
\lambda_1&\lambda_2 &\cdots&\lambda_m\\
\vdots&\vdots&\ddots&\vdots\\
\lambda_1^{m-1}&\lambda_2^{m-1}&\cdots&\lambda_m^{m-1}
\end{array}\right)
\]
it is clear that
\begin{equation}
\label{eq:AR=RD}
AR=R\left(\begin{array}{ccc}
\lambda_1&\\
&\ddots&\\
&&\lambda_m
\end{array}\right).
\end{equation}
Denote by $^{co}\!R=(r_{ij})$ the cofactor matrix of $R$ and by $\Delta(
\lambda_1,\ldots,\lambda_k)$ the difference-product of $\lambda_1,\ldots,\lambda_k$.
 It is easily seen that $r_{ij}$ is divisible by $\Delta_i=\Delta(\lambda_1,\ldots,\lambda_{i-1},\lambda_{i+1},\ldots,\lambda_m)$, hence
 \begin{equation}
 \label{eq:2.1}
 r_{ij}=g_{ij}(\lambda_1,\ldots,\lambda_{i-1},\lambda_{i+1},\ldots,\lambda_m)\Delta_i.
 \end{equation}
Since $r_{ij}$ and $\Delta_i$ are alternating polynomials in $(\lambda_1,\ldots,\lambda_{i-1},\lambda_{i+1},\ldots,\lambda_m)$ of degree $m(m-1)/2-j+1$ and  $(m-1)(m-2)/2$ respectively, as a result $g_{ij}$ is a symmetric polynomial in $(\lambda_1,\ldots,\lambda_{i-1},\lambda_{i+1},\ldots,\lambda_m)$ of degree
\[
 m-j=m(m-1)/2-j+1-(m-1)(m-2)/2.
 \]
  Therefore   it follows that $g_{ij}$ is a polynomial in $\sigma_{\ell,i}$. Noting that $\Delta_i$ is of degree $m-2$ and $r_{ij}$ ($j\neq m$) is of degree $m-1$ respectively with respect to $\lambda_{\ell}$ ($\ell\neq i$),  one concludes that $g_{ij}$ is of degree $1$ with respect to $\lambda_{\ell}$ ($\ell\neq i$) which proves that
\begin{equation}
\label{eq:2.2}
g_{ij}=(-1)^{i+j}\sigma_{m-j,i}.
\end{equation}
Thus denoting $G=(g_{ij})$ it follows from \eqref{eq:hij} and \eqref{eq:2.2} that
\[
H=(h_{ij})={^tG}\Lambda G, \quad \Lambda={\rm diag}(\alpha_1,\ldots,\alpha_m)
\]
here another proof of  the nonnegative definiteness of the B\'ezout matrix $H$ of $p$ and $q$.

Set $D={\rm diag}\,(\Delta_1,\ldots,\Delta_m)$ and note that $D$ is invertible. It follows from \eqref{eq:2.1} that $G=D^{-1}(^{co}\!R)=({\rm det}\,R)D^{-1}R^{-1}$ and hence 
\[
GAG^{-1}=D^{-1}(R^{-1}AR)D.
\]
It is clear that $\Lambda GAG^{-1}$ is a diagonal matrix because $R^{-1}AR$, $D$ and $\Lambda$ are diagonal matrices. Then $\Lambda GAG^{-1}={^t}G^{-1}\,{^t}\!A{^t}G\Lambda$ yields ${^t}G\Lambda GA={^t}\!A{^t}G\Lambda G$ which proves that $HA$ is symmetric. From $G=({\rm det}\,R)D^{-1}R^{-1}$ it follows that
\begin{equation}
\label{eq:Cform}
G={\rm diag}\,\big(\pm p_{1}(\lambda_1),
\pm\, p_{2}(\lambda_2),\ldots, \pm\, p_m(\lambda_m)\big)R^{-1}
\end{equation}
and hence $\big({\rm det}\,G\big)^2=\prod_{j=1}^mp_j(\lambda_j)^2/\Delta^2$ where $\Delta={\rm det}\,R=\Delta(\lambda_1,\ldots,\lambda_m)$. Consequently, since $p_j(\lambda_j)=p'(\lambda_j)$ one has
\[
{\rm det}\,H
=\frac{1}{\Delta^2}\prod_{j=1}^mp_j(\lambda_j)^2\prod_{j=1}^m\alpha_j=\frac{1}{\Delta^2}\prod_{j=1}^mp_j(\lambda_j)\prod_{j=1}^mq(\lambda_j)=\prod_{j=1}^mq(\lambda_j)
\]
which is the resultant of $p$ and $q$ and this completes the proof for strictly hyperbolic polynomial $p(\zeta)$.

Passing to the general case, following  \cite{Nuij} introduce Nuij approximation of $p$ and $q$
\begin{equation}
\label{eq:nuij}
p_{\ep}(\zeta) =\big(1 + \ep\,(d/d\zeta)\big)^{m-1} p(\zeta),\quad q_{\ep}(\zeta) =\big(1 + \ep\,(d/d\zeta) \big)^{m-1} q(\zeta)
\end{equation}
for $\ep>0$. Making a closer look at the Nuij approximation one has
\begin{lem}
\label{lem:pq:kinji}
For $\ep> 0$, both $p_{\ep}$ and $q_{\ep}$ are strictly hyperbolic and $q_{\ep}$ separates $p_{\ep}$. Write $p_{\ep}(\zeta)=\prod_{j=1}^m\big(\zeta-\lambda_j(\ep)\big)$ where $\lambda_1(\ep)\leq \lambda_2(\ep)\leq \cdots\leq \lambda_m(\ep)$ then one can find  $c>0$ depending only on $m$ such that
\begin{equation}
\label{eq:nuij:sa}
\lambda_{k+1}(\ep)-\lambda_k(\ep)\geq c \,\ep,\quad j=1,\ldots,m-1.
\end{equation}
\end{lem}
For the sake of completeness we give a proof in the last section.
 
 Let $A_{\ep}$ be the Sylvester matrix associated with $p_{\ep}$ and let $H_{\ep}={^tG}_{\ep} \Lambda_{\ep}G_{\ep}$ be the B\'ezout matrix of  $p_{\ep}$ and $q_{\ep}$. Note that every entry of $H_{\ep}$  is a polynomial in coefficients of $p_{\ep}$ and $q_{\ep}$ by definition, hence obviously, as $\ep \to 0$, we have $A_{\ep} \rightarrow A$, $H_{\ep}\to H$, for the coefficients of $p_{\ep}(\zeta)$ and $q_{\ep}(\zeta)$  go to the ones of $p(\zeta)$ and $q(\zeta)$. Similarly the resultant of $p_{\ep}$ and $q_{\ep}$ converges to that of $p$ and $q$.
Letting $\ep \to 0$ we obtain the result.
\end{proof}  
The next corollary is found in \cite{Ja2}, \cite{Nbook}.
\begin{cor}
\label{cor:Asakura} Assume that $p$ is a monic hyperbolic polynomial and let $H$ be the B\'ezout matrix of $p$ and $p'=\dif p/\dif \zeta$. Then $H$ is nonnegative definite and symmetrizes $A$ and ${\rm det }\,H$  
is  the discriminant of $p$.
\end{cor}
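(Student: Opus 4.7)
The plan is to obtain this as a direct specialization of Proposition \ref{pro:hyp:sym} with $q=p'$, after verifying that $p'$ meets the separation hypothesis.

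First I would check the hypotheses. Since $p$ is monic of degree $m$, its derivative $p'(\zeta)=m\zeta^{m-1}+\cdots$ is a real polynomial of degree $m-1$ with positive leading coefficient $m$. By Corollary \ref{cor:fourthree}, $p'$ separates $p$ in the sense of Definition \ref{dnfourone}. Hence Proposition \ref{pro:hyp:sym} applies to the pair $(p,p')$: the Bézout matrix $H$ of $p$ and $p'$ is nonnegative definite and symmetrizes the Sylvester matrix $A$.

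It remains to identify $\det H$. Proposition \ref{pro:hyp:sym} gives $\det H=\prod_{j=1}^{m}p'(\lambda_j)$, which is precisely the resultant of $p$ and $p'$ when $p$ is monic. Recalling the standard identity
\[
\prod_{j=1}^m p'(\lambda_j)=(-1)^{m(m-1)/2}\prod_{i<j}(\lambda_i-\lambda_j)^2,
\]
the right-hand side is (up to the conventional sign) the discriminant of $p$, so $\det H$ coincides with it.

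There is essentially no obstacle: once one notices that $p'$ separates $p$ (Corollary \ref{cor:fourthree}) and that its leading coefficient $m$ is positive, the corollary follows by quoting Proposition \ref{pro:hyp:sym}. The only point requiring a line of justification is the elementary translation between the resultant $\prod_j p'(\lambda_j)$ produced by the proposition and the discriminant $\prod_{i<j}(\lambda_i-\lambda_j)^2$ of $p$.
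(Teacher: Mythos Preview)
Your proof is correct and is exactly the argument the paper intends: the corollary is stated immediately after Proposition~\ref{pro:hyp:sym} with no separate proof, so one simply specializes to $q=p'$, using Corollary~\ref{cor:fourthree} to check the separation hypothesis. Your remark about the sign in passing from the resultant $\prod_j p'(\lambda_j)$ to the discriminant is apt; the paper is silent on this point and is evidently identifying the two up to convention.
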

%

\section{Quasi-symmetrizers by B\'ezout matrices}

Let $p(\zeta)$ be a monic hyperbolic polynomial of degree $m$. Assume that one can find a family of monic strictly hyperbolic polynomials $\{p_{\ep}(\zeta)\}_{\ep>0}$ of degree $m$
\[
p_{\ep}(\zeta)=\prod_{j=1}^m\big(\zeta-\lambda_j(\ep)\big),\quad \lambda_1(\ep)<\lambda_2(\ep)<\cdots <\lambda_m(\ep)
\]
and constants $r\geq 0$, $s>0$ and  $c>0$, $C>0$ independent of $\ep>0$ such that
\begin{eqnarray}
\label{eq:quasi:cond:1}
c\,\ep^{r}\leq \big|p'_{\ep}\big(\lambda_j(\ep)\big)\big|,\\
\label{eq:quasi:cond:2}
\big|q_{\ep}\big(\lambda_j(\ep)\big)\big|\leq C\ep^s\,\big|p'_{\ep}\big(\lambda_j(\ep)\big)\big|
\end{eqnarray}
for $j=1,\ldots,m$ where $p'_{\ep}(\zeta)=\dif p_{\ep}/\dif \zeta$ and $q_{\ep}(\zeta)=p(\zeta)-p_{\ep}(\zeta)$.
\begin{prop}
\label{pro:quasi:gene} Assume that $p_{\ep}(\zeta)$ verifies \eqref{eq:quasi:cond:1}, \eqref{eq:quasi:cond:2} and let $H_{\ep}$ be the B\'ezout matrices of $p_{\ep}$ and $p'_{\ep}$. Then there exists $C>0$ independent of $\ep$ such that 
\begin{equation}
\label{eq:quasi:def}
\begin{split}
 &\ep^{2r}|z|^2\leq C\,(H_{\ep}z,z),\quad \forall z\in \C^m,\\
& \big|\big((H_{\ep}A-A^*H_{\ep})z,w\big)\big|\leq C\,\ep^s\,(H_{\ep}z,z)^{1/2}(H_{\ep}w,w)^{1/2},\quad \forall z,w\in \C^m.
\end{split}
\end{equation}
\end{prop}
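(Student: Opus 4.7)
The plan is to specialize the structural formulas from the proof of Proposition~\ref{pro:hyp:sym} to the case $q = p'_{\ep}$ and combine them with assumptions \eqref{eq:quasi:cond:1}, \eqref{eq:quasi:cond:2}. With $q = p'_{\ep}$ the Lagrange coefficients in \eqref{eq:Lagra} satisfy $\alpha_k(\ep) \equiv 1$, so $H_{\ep} = {}^t G_{\ep}\, G_{\ep}$ and $(H_{\ep}z,z) = |G_{\ep}z|^{2} = \sum_{k=1}^{m}|\hat p_{\ep,k}(z)|^{2}$ by Lemma~\ref{lem:fourfour}. Formula \eqref{eq:Cform} then gives $G_{\ep}^{-1} = R_{\ep}\,{\rm diag}\big(\pm 1/p'_{\ep}(\lambda_{j}(\ep))\big)$, where $R_{\ep}$ is the Vandermonde matrix of the $\lambda_{j}(\ep)$. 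Since the $\lambda_{j}(\ep)$ stay in a bounded set, $\|R_{\ep}\|$ is bounded, so \eqref{eq:quasi:cond:1} yields $\|G_{\ep}^{-1}\|\leq C\,\ep^{-r}$ and hence $\ep^{2r}|z|^{2} \leq C\,(H_{\ep}z,z)$, which is the first half of \eqref{eq:quasi:def}.

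For the commutator estimate I would write $A = A_{\ep} + B_{\ep}$, where $A_{\ep}$ is the Sylvester matrix of $p_{\ep}$. The matrices $A$ and $A_{\ep}$ differ only in the last row, whose entries are the coefficients of $p$, resp.\ $p_{\ep}$, read in reverse order with a sign, so an explicit inspection gives
\[
B_{\ep}z = -\hat q_{\ep}(z)\,e, \qquad e = {}^t(0,\ldots,0,1)\in\C^{m},
\]
with $q_{\ep} = p - p_{\ep}$ of degree $\leq m-1$. Proposition~\ref{pro:hyp:sym} applied to the pair $(p_{\ep},p'_{\ep})$ gives $H_{\ep}A_{\ep} = A_{\ep}^{*}H_{\ep}$, so
\[
H_{\ep}A - A^{*}H_{\ep} = H_{\ep}B_{\ep} - B_{\ep}^{*}H_{\ep}.
\]
Because $H_{\ep}\geq 0$ satisfies Cauchy-Schwarz in the form $|(H_{\ep}u,v)|\leq (H_{\ep}u,u)^{1/2}(H_{\ep}v,v)^{1/2}$, the second half of \eqref{eq:quasi:def} reduces to showing $(H_{\ep}B_{\ep}z,B_{\ep}z) \leq C\,\ep^{2s}(H_{\ep}z,z)$.

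This final reduction is the substantive step, and it is where \eqref{eq:quasi:cond:2} enters through Lagrange interpolation. From $B_{\ep}z = -\hat q_{\ep}(z)\,e$ one has $(H_{\ep}B_{\ep}z, B_{\ep}z) = |\hat q_{\ep}(z)|^{2}(H_{\ep}e,e)$, and $(H_{\ep}e,e) = \sum_{k}|\hat p_{\ep,k}(e)|^{2} = m$ since each $p_{\ep,k}$ is monic of degree $m-1$. Applying Lagrange interpolation to $q_{\ep}$ at the simple roots $\lambda_{j}(\ep)$ of $p_{\ep}$ yields
\[
\hat q_{\ep}(z) = \sum_{k=1}^{m}\frac{q_{\ep}(\lambda_{k}(\ep))}{p'_{\ep}(\lambda_{k}(\ep))}\,\hat p_{\ep,k}(z),
\]
and, invoking \eqref{eq:quasi:cond:2} to bound each coefficient by $C\ep^{s}$, Cauchy-Schwarz in $k$ gives $|\hat q_{\ep}(z)|^{2}\leq C\,\ep^{2s}\sum_{k}|\hat p_{\ep,k}(z)|^{2} = C\,\ep^{2s}(H_{\ep}z,z)$, which closes the argument. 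The main place where care is needed is the identification of $B_{\ep}$ as the rank-one map $z\mapsto -\hat q_{\ep}(z)\,e$; the rest is an assembly of ingredients already in place in Section~2.
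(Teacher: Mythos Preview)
Your proof is correct and follows essentially the same route as the paper's: same factorization $H_{\ep}={}^tG_{\ep}G_{\ep}$, same lower bound via \eqref{eq:Cform} and \eqref{eq:quasi:cond:1}, same splitting $A=A_{\ep}+Q_{\ep}$ with $H_{\ep}A_{\ep}$ symmetric. The only cosmetic difference is in the commutator estimate: the paper writes $Q_{\ep}=S_{\ep}G_{\ep}$ with $|S_{\ep}z|\leq C\ep^{s}|z|$ (which is exactly your Lagrange interpolation, read matrixwise) and then bounds $|(S_{\ep}G_{\ep}z,H_{\ep}w)|$ directly using $|H_{\ep}w|\leq C_{1}|G_{\ep}w|$, whereas you exploit the rank-one form $B_{\ep}z=-\hat q_{\ep}(z)e$ together with $(H_{\ep}e,e)=m$ and the Cauchy--Schwarz inequality for $H_{\ep}$; the two computations are equivalent.
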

\begin{proof} Denote by $R_{\ep}$ and $G_{\ep}$ which are defined by replacing $\lambda_j$ by $\lambda_j(\ep)$. Let $H_{\ep}$ be the B\'ezout matrix of $p_{\ep}$ and $p'_{\ep}=\dif p_{\ep}/\dif \zeta$. Noting that $\alpha_k=1$ in \eqref{eq:hokan} one has $H_{\ep}={^t}G_{\ep}G_{\ep}$. Since 
$\lambda_j(\ep)$ are bounded uniformly in $\ep>0$, which follows from \eqref{eq:quasi:cond:2}, it is clear that there is $C_1>0$ independent of $\ep$ such that
\begin{equation}
\label{eq:RtoC}
\big|R_{\ep}z|\leq C_1|z|,\qquad |G_{\ep}z|\leq C_1|z|.
\end{equation}
From \eqref{eq:RtoC} one has $|z|\leq C|R^{-1}_{\ep}z|$ then it follows from \eqref{eq:Cform} and \eqref{eq:quasi:cond:1} that there is $c>0$ such that $|G_{\ep}z|\geq c\,\ep^{r}|z|$. This implies
\[
c^2\,\ep^{2r}|z|^2\leq (H_{\ep}z,z)
\]
for $H_{\ep}={^tG}_{\ep}G_{\ep}$. Denoting by $A$ and $A_{\ep}$ the Sylvester matrices associated with $p$ and $p_{\ep}$ respectively, one has
\begin{equation}
\label{eq:Atoep}
A=A_{\ep}+Q_{\ep}
\end{equation}
where $Q_{\ep}$ is $m\times m$ matrix whose first $m-1$ rows are zero and of which last row consists of the coefficients of $q_{\ep}$, that is $-\big(b_m(\ep),b_{m-1}(\ep),\ldots,b_1(\ep)\big)$ where $q_{\ep}(\zeta)=\sum_{j=0}^mb_{m-j}(\ep)\zeta^j$. Since  $H_{\ep}A_{\ep}$ is symmetric and hence
\[
H_{\ep}A-A^*H_{\ep}=H_{\ep}Q_{\ep}-Q^*_{\ep}H_{\ep}
\]
by \eqref{eq:Atoep}. It is easy to see from the definition that all entries of $Q_{\ep}R_{\ep}$ are zero except for the last row and the last row is $-\big(q_{\ep}(\lambda_1(\ep)),\ldots, q_{\ep}(\lambda_m(\ep))\big)$. Recall that \eqref{eq:Cform} gives
\[
R_{\ep}^{-1}={\rm diag}\,\big(\pm p'_{\ep}(\lambda_1(\ep))^{-1},
\ldots, \pm p'_{\ep}(\lambda_m(\ep))^{-1}\big)G_{\ep}
\]
then one can write $
Q_{\ep}=\big(Q_{\ep}R_{\ep}\big)R_{\ep}^{-1}=S_{\ep}G_{\ep}$  
where the last row of  $S_{\ep}$ is 
\[
\mp\,\big( q_{\ep}(\lambda_1(\ep))p'_{\ep}(\lambda_1(\ep))^{-1},\ldots,q_{\ep}(\lambda_m(\ep))p'_{\ep}(\lambda_m(\ep))^{-1}\big)
\]
and hence $|S_{\ep}z|\leq C\ep^s\,|z|$ for $z\in \C^m$ thanks to \eqref{eq:quasi:cond:2}. Therefore from \eqref{eq:RtoC} one concludes
\begin{align*}
\big|\big(H_{\ep}Q_{\ep}z,w\big)\big|=\big|\big(S_{\ep}G_{\ep}z, H_{\ep}w\big)\big|\leq \ep^s\,C\big|G_{\ep}z\big|\big|H_{\ep}w\big|\\
\leq \ep^s\,CC_1\big|G_{\ep}z\big|\big|G_{\ep}w\big|=\ep^s\,C_2\big(H_{\ep}z,z\big)^{1/2}\big(H_{\ep}w,w\big)^{1/2}.
\end{align*}
Since the estimate for $\big|\big(Q^*_{\ep}H_{\ep}z,w\big)\big|=\big|(H_{\ep}z,Q_{\ep}w)\big|$ is  same as above one completes the proof.
\end{proof}
The following corollary is found in \cite{Nbook}.
\begin{cor}
\label{cor:quasi:sym}Assume that the multiplicity of any root of $p(\zeta)=0$ does not exceed $\rho$ then   the B\'ezout matrices $H_{\ep}$ of Nuij approximation $p_{\ep}$ and $p'_{\ep}$ is quasi-symmetrizers, that is $H_{\ep}$ verifies \eqref{eq:quasi:def} with $r=\rho-1$ and $s=1$.
\end{cor}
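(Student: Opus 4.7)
The plan is to apply Proposition \ref{pro:quasi:gene} with $p_\ep$ the Nuij approximation \eqref{eq:nuij} and with $q_\ep:=p-p_\ep$ playing the role of the perturbation in the proposition. Lemma \ref{lem:pq:kinji} already gives strict hyperbolicity of $p_\ep$, so everything reduces to verifying \eqref{eq:quasi:cond:1} and \eqref{eq:quasi:cond:2} with $r=\rho-1$ and $s=1$. A key simplification is that $\lambda_k(\ep)$ is a root of $p_\ep$, so $q_\ep(\lambda_k(\ep))=p(\lambda_k(\ep))$; both conditions thus become statements about $p$, $p_\ep'$, and the roots $\lambda_k(\ep)$.

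The geometric input I would establish is the clustering picture: writing the distinct roots of $p$ as $\lambda_{(1)}<\cdots<\lambda_{(s)}$ with multiplicities $r_1,\ldots,r_s\leq\rho$, for $\ep$ small exactly $r_j$ of the roots $\lambda_k(\ep)$ lie near $\lambda_{(j)}$, different clusters remain separated by a positive constant, and each root in the cluster around $\lambda_{(j)}$ satisfies the quantitative bound $|\lambda_k(\ep)-\lambda_{(j)}|\leq C\ep$. This comes from the rescaling $\zeta=\lambda_{(j)}+\ep\eta$, under which $\ep\partial_\zeta=\partial_\eta$ on functions of $\eta$, so that $\ep^{-r_j}p_\ep(\lambda_{(j)}+\ep\eta)=(1+\partial_\eta)^{m-1}\bigl[\eta^{r_j}\prod_{i\neq j}(\lambda_{(j)}-\lambda_{(i)}+\ep\eta)^{r_i}\bigr]$ converges uniformly on bounded $\eta$ to $c_j(1+\partial_\eta)^{m-1}\eta^{r_j}$ with $c_j:=\prod_{i\neq j}(\lambda_{(j)}-\lambda_{(i)})^{r_i}\neq0$; the limit is strictly hyperbolic of degree $r_j$, and its $r_j$ real distinct $\eta$-roots determine the locations of the cluster up to $o(\ep)$.

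Given this picture, \eqref{eq:quasi:cond:1} follows from the factorization $p_\ep'(\lambda_k(\ep))=\prod_{\ell\neq k}(\lambda_k(\ep)-\lambda_\ell(\ep))$: the $r_j-1$ factors coming from the other roots in the same cluster are bounded below by $c\ep$ thanks to \eqref{eq:nuij:sa}, while the $m-r_j$ factors from other clusters are bounded below by a positive constant. This gives $|p_\ep'(\lambda_k(\ep))|\geq c'\ep^{r_j-1}\geq c'\ep^{\rho-1}$ uniformly in $j,k$. For \eqref{eq:quasi:cond:2}, I factor $p(\zeta)=\prod_i(\zeta-\lambda_{(i)})^{r_i}$ and evaluate at $\lambda_k(\ep)$: the factor $(\lambda_k(\ep)-\lambda_{(j)})^{r_j}$ is bounded by $C\ep^{r_j}$ by the clustering, the remaining factors are $O(1)$, and so $|p(\lambda_k(\ep))|\leq C''\ep^{r_j}\leq C'''\,\ep\,|p_\ep'(\lambda_k(\ep))|$, yielding \eqref{eq:quasi:cond:2} with $s=1$.

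The principal obstacle is the quantitative clustering estimate $|\lambda_k(\ep)-\lambda_{(j)}|\leq C\ep$, which is not explicitly stated in Lemma \ref{lem:pq:kinji} but is the natural companion of the within-cluster separation \eqref{eq:nuij:sa} and comes out of the same scaling argument. Once it is secured, the rest of the verification and the invocation of Proposition \ref{pro:quasi:gene} are immediate.
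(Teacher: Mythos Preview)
Your proposal is correct. Both you and the paper reduce to checking \eqref{eq:quasi:cond:1} and \eqref{eq:quasi:cond:2}, and the treatment of \eqref{eq:quasi:cond:1} is essentially the same: factor $p'_\ep(\lambda_k(\ep))=\prod_{\ell\neq k}(\lambda_k(\ep)-\lambda_\ell(\ep))$, use \eqref{eq:nuij:sa} for the at most $\rho-1$ small factors, and (implicitly in the paper, explicitly in your write-up) the convergence of roots to bound the remaining factors from below. The treatments of \eqref{eq:quasi:cond:2} genuinely differ. You exploit $q_\ep(\lambda_k(\ep))=p(\lambda_k(\ep))$ and evaluate $p$ directly from its factorization, which forces you to prove the quantitative upper bound $|\lambda_k(\ep)-\lambda_{(j)}|\leq C\ep$ via the rescaling argument. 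The paper instead inverts the Nuij operator algebraically, writing $q_\ep=\sum_{\ell\geq 1}c_\ell\,\ep^\ell p_\ep^{(\ell)}$, and then bounds each ratio $|p_\ep^{(\ell)}(\lambda_j(\ep))|/|p'_\ep(\lambda_j(\ep))|\leq C\ep^{-(\ell-1)}$ using only the lower bound \eqref{eq:nuij:sa}. The paper's route is therefore self-contained from Lemma~\ref{lem:pq:kinji} exactly as stated, with no need for the additional $O(\ep)$ clustering estimate; your route is more geometric and arguably more intuitive, at the cost of supplying that extra (easy, and correctly sketched) lemma.
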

\begin{proof} It suffices to check \eqref{eq:quasi:cond:1} and \eqref{eq:quasi:cond:2} with $r=\rho-1$ and $s=1$. 
Since the multiplicity of the roots are at most $\rho$ it is clear from Lemma \ref{lem:pq:kinji} that
\[
|p'_{\ep}(\lambda_j(\ep))|=\prod_{k=1, k\neq j}^m\big|\lambda_j(\ep)-\lambda_k(\ep)\big|\geq c\, \ep^{\rho-1},\quad j=1,\ldots,m.
\]
Note that  one can invert $ (1+\ep  d/d\zeta)^{m-1}p(\zeta)=p_{\ep}(\zeta)$ such that
\[
p(\zeta)=p_{\ep}(\zeta)+c_1\ep\,p_{\ep}^{(1)}(\zeta)+\cdots+c_m\ep^m\,p_{\ep}^{(m)}(\zeta),\quad p^{(\ell)}_{\ep}(\zeta)=d^{\ell}p_{\ep}(\zeta)/d\zeta^{\ell}.
\]
and hence $q_{\ep}=\sum_{\ell=1}^mc_j\,\ep^{\ell}\,p^{(\ell)}_{\ep}$. 
Since
\[
p^{(\ell)}_{\ep}(\lambda_j(\ep))=\sum_{1\leq k_1<\cdots<k_{m-\ell}\leq m, k_i\neq j}\prod \big(\lambda_j(\ep)-\lambda_{k_i}(\ep)\big)
\]
it is clear from \eqref{eq:nuij:sa} that
\[
\big|p^{(\ell)}_{\ep}(\lambda_j(\ep))\big|/\big|p'_{\ep}(\lambda_j(\ep))\big|\leq C\,\ep^{-(\ell-1)},\quad \ell=1,\ldots,m
\]
which proves \eqref{eq:quasi:cond:2} with $s=1$.
\end{proof}
%

\section{Rrmarks}

Assume that $p(\zeta)$ is a strictly hyperbolic polynomial. From \eqref{eq:AR=RD} it follows that $R^{-1}AR$ is diagonal and hence symmetric which shows $(R\,^t\!R)A=A(R\,^t\!R)$. Then with $S=R\,^t\!R$ one sees that $AS$ is symmetric. Since $S$ is symmetric 
\[
S^{-1}A=S^{-1}(AS)S^{-1}
\]
is also symmetric. On the other hand, denoting $S=(s_{ij})$ it is clear that $s_{ij}=\sum_{k=1}^m\lambda_k^{i+j}$ which is a symmetric polynomial in $(\lambda_1,\ldots,\lambda_m)$ and hence a polynomial in $(a_1,\ldots,a_m)$. Denote $B=({\rm det}\,S)S^{-1}$ then  $BA$ is symmetric and $B$ is positive definite because ${\rm det}\,S=({\rm det}\,R)^2=\Delta^2>0$. Since $B$ is the cofactor matrix of $S$ then every entry of $B$ is also a polynomial in $(a_1,\ldots,a_m)$. This $B$ is the symmetrizer which was used to derive energy estimates for strictly hyperbolic equations in \cite[Chapter V]{Leray}. From \eqref{eq:Cform} one can write $H={^t}\!R^{-1}{\rm diag}\,\big((p'(\lambda_1))^2,\ldots,(p'(\lambda_m))^2\big)R^{-1}$.  Then it is clear that
\[
HR\,{\rm diag}\,\big((p'(\lambda_1))^{-2},\ldots,(p'(\lambda_m))^{-2}\big)R^{-1}={^t}\!R^{-1}R^{-1}=\Delta^{-2}\,B.
\]
In particular, $B=H$ if $m=2$ and ${\rm det}\,B=\Delta^{2(m-1)}$.

%
\if0

Consider the differential equation $p(D_t)u=D_t^mu+a_1D_t^{m-1}u+\cdots+a_mu=0$. With $U={^t}(u,D_tu,\ldots,D_t^{m-1}u)$ the equation  is reduced to $D_tU=AU$ where $A$ is the Sylvester matrix associated to $p(\zeta)$. Assume that the multiplicity of the roots of $p(\zeta)=0$ are at most $r$. Denote $\phi_{\ep}(t)=(H_{\ep}U(t),U(t))$ with $\ep>0$ then 
\begin{equation}
\label{eq:difeq}
\frac{d}{dt}\phi_{\ep}(t)=-2\,{\mathsf{Im}}\,\big((H_{\ep}A-A^*H_{\ep})U(t),U(t)\big)\leq \ep\, C\phi_{\ep}(t).
\end{equation}
Fix $T\geq 1$ arbitrarily and consider $0\leq t\leq T$. From \eqref{eq:difeq} it follows that
\begin{equation}
\label{eq:difineq}
\phi_{\ep}(t)\leq \phi_{\ep}(0)e^{\ep\,Ct}.
\end{equation}
Taking $\phi_{\ep}(t)\geq c\,\ep^{2(r-1)}|U(t)|^2$ into account, choosing $\ep=T^{-1}$ in \eqref{eq:difineq} one has for $0\leq t\leq T$
\[
|U(t)|\leq C_1\sqrt{\phi_{\ep}(0)}\ep^{-(r-1)}\leq C_2|U(0)|T^{r-1}
\]
though this follows from the explicit formula $U(t)=U(0)e^{itA}$ immediately and is obvious. If $r\geq 2$ the discriminant of $p$ vanishes and hence ${\rm det}\,H=0$. Assume
\[
U(0)\in \bigcap_{j=0}^s{\rm Ker}\,H_{2j}
\]
where $H_0=H$. Since $H_{\ep}$ is a polynomial in $\ep$ one can write $H_{\ep}=H+\ep H_1+\ep^2H_2+\cdots$. From $(H_{\ep}U(0),U(0))\geq 0$ one sees that $(H_{2j+1}U(0),U(0))=0$, $j=0,\ldots, s$ and hence $\phi_{\ep}(0)\leq \ep^{2s+2}C_3|U(0)|^2$. Repeating the same arguments as above one obtains
\[
|U(t)|\leq C_4|U(0)|T^{r-s-2},\quad 0\leq t\leq T
\]
which is less obvious.
\fi
%

Symmetrizations by B\'ezoutiant or quasi-symmetrizers are applied to several problems by several authors, see for example, \cite{Ja1}, \cite{Ja2}, \cite{JaTa}, \cite{DaSpa}, \cite{KiSpa}, \cite{SpaTa2}, \cite{NP}. In particular, interesting results for the Cauchy problem for differential operators with time dependent coefficients are obtained in \cite{JaTa} based on detailed study on $H$, while quasi-symmetrizers $H_{\ep}$ is applied to study  propagation of the analyticity for a class of semilinear weakly hyperbolic systems in \cite{DaSpa}.

\section{Proof of Lemmas}

First we give a proof of Lemma \ref{lem:fourone}. 

\smallskip
\noindent
Proof of Lemma \ref{lem:fourone}: 
 If $s=1$ then the assertion is clear. Let $s\geq 2$ and denote
$$
a(\zeta)=\prod_{j=1}^s(\zeta-\lambda_{(j)}),\qquad b(\zeta)=\prod_{j=1}^{s-1}(\zeta-\mu_j)
$$
so that $
b(\zeta)\big\{\prod_{j=1}^s(\zeta-\lambda_{(j)})^{r_j-1}\big\}=q(\zeta)$.  
Set
\begin{equation}
a_k(\zeta)=\prod_{j\neq k}^s(\zeta-\lambda_{(j)}),\quad \al_k=\frac{b(\lambda_{(k)})}{
a_k(\lambda_{(k)})}>0.
\label{eqfourone}
\end{equation}
Writing $b(\zeta)=\sum_{k=1}^s\alpha_k\,a_k(\zeta)$, the same argument as before gives
$$
a(\zeta)b({\bar\zeta})-a({\bar\zeta})b(\zeta)=(\zeta-{\bar\zeta})\sum_{k=1}^s\al_k\,a_k(\zeta)a_k({\bar\zeta}).
$$
Now we have
\begin{eqnarray*}
\frac{p(\zeta)q({\bar\zeta})-p({\bar\zeta})q(\zeta)}{\zeta-{\bar\zeta}}=\frac{\Big|\prod_{j=1}^s(
\zeta-\lambda_{(j)})^{r_j-1}\Big|^2\big(a(\zeta)b({\bar\zeta})-a({\bar\zeta})b(\zeta)\big)}{\zeta-{\bar\zeta}}\\
=\sum_{k=1}^s\al_k\Big|\prod_{j=1}^s(
\zeta-\lambda_{(j)})^{r_j-1}\Big|^2|a_k(\zeta)|^2=
\sum_{k=1}^s\al_k\Big|\prod_{j=1}^s(
\zeta-\lambda_{(j)})^{r_j-\delta_{kj}}\Big|^2
\end{eqnarray*}
where $\delta_{kj}$ is the Kronecker's delta. This proves that
\begin{equation}
h_{p,q}(\zeta,{\bar\zeta})=\sum_{k=1}^s\al_k\,\phi_k(\zeta)\phi_k(\bar \zeta),\quad \phi_k(\zeta)=\prod_{j=1}^s(\zeta-\lambda_{(j)})^{r_j-\delta_{kj}}\;.
\label{eqfourtwo}
\end{equation}
Since 
$$
\sum_{k=1}^m\big|p_k(\zeta)\big|^2=\sum_{k=1}^m\prod_{j\neq  k}^m|\zeta-\lambda_j|^2=
\sum_{k=1}^sr_k\,\phi_k(\zeta)\phi_k({\bar\zeta)}
$$
we get the desired inequality
\[
{\hat h}(z,{\bar z})=\sum_{k=1}^s\alpha_k\,\phi_k(z)\phi_k({\bar z)}\geq c \sum_{k=1}^sr_k\,\phi_k(z)\phi_k({\bar z)}=\sum_{j=1}^m|{\hat p}_k(z)|^2
\]
with $c=\min{\alpha_k/r_k}$.
We turn to the proof of the converse. Note that \eqref{eq:bunri} implies $h_{p,q}(\zeta,{\bar\zeta})\geq c\,\sum_{j=1}^m|p_k(\zeta)|^2$. Since
\begin{equation}
\frac{\dif p}{\dif \zeta}(\zeta)q(\zeta)-p(\zeta)\frac{\dif q}{\dif \zeta}(\zeta)
=h_{p,q}(\zeta,\zeta),\quad \zeta\in\R
\label{eqfourthree:bis}
\end{equation}
it is clear from the assumption that the zeros of $q$ other than $\{\lambda_{(j)}\}$ 
are simple. It is also clear from \eqref{eqfourthree:bis} that the coefficient of $\zeta^{m-1}$ 
in $q$ is positive. We examine that $q$ has no zero in $(-\infty,\lambda_{(1)})$. If 
there were, we denote  the minimal one by $\mu$. Then we see that
$$
\frac{\dif q}{\dif \zeta}(\mu)>0\;(<0)
$$
if $m$ is even (odd). On the other hand $p(\mu)$ has the sign $(-1)^m$ it follows 
that
$$
-p(\mu)\frac{\dif q}{\dif \zeta}(\mu)<0
$$
and hence $h_{p,q}(\mu,\mu)<0$, contradicting the assumption. We then examine that 
$q$ has no zero in $\zeta>\lambda_{(s)}$. This can be checked by a similar way. We next 
show that $q$ has at most one zero in each $(\lambda_{(k)},\lambda_{(k+1)})$. If not there 
were two successive simple zeros $\mu_i\in (\lambda_{(k)},\lambda_{(k+1)})$, $i=1,2$ and hence 
$p(\zeta)\cdot\dif q(\zeta)/\dif \zeta$ has different signs at $\mu_1$ and $\mu_2$ and 
hence a contradiction. Thus we can conclude that either $q(\zeta)$ separates $p(\zeta)$ 
or some $\lambda_{(j)}$ is a zero of $q(\zeta)$ of order greater than $r_j-1$. Suppose that 
this is the case. Then one can write
$$
q(\zeta)=(\zeta-\lambda_{(j)})^lr(\zeta),\quad l\geq r_j.
$$
Taking $\zeta=\lambda_{(j)}+\xi$ we see that the right-hand side of \eqref{eqfourthree:bis} is 
$O(|\xi|^{l+r_j-1})$. On the other hand it is clear that
$$
|h_{p,q}(\zeta,\zeta)|\geq c\,|\xi|^{2(r_j-1)}
$$
with some $c>0$. This contradicts the assumption.
\qed

\medskip
Noting that $
p(\zeta)p'({\bar\zeta})=\dif(p(\zeta)p({\bar\zeta}))/\dif {\bar\zeta}$ and $p'(\zeta)p({\bar\zeta})=
\dif (p(\zeta)p({\bar\zeta}))/{\dif \zeta}
$
one has
$$
h_{p,p'}(\zeta,{\bar\zeta})=\Big(\frac{\dif}{\dif\bar\zeta}
\prod_{j=1}^m (\zeta-\lambda_j)({\bar\zeta}-\lambda_j)-\frac{\dif}{\dif\zeta}
\prod_{j=1}^m (\zeta-\lambda_j)({\bar\zeta}-\lambda_j)\Big)/(\zeta-{\bar\zeta}).
$$
Since $
(\dif/\dif\bar\zeta-\dif/\dif\zeta)(\zeta-\lambda_j)({\bar\zeta}-\lambda_j)
=\zeta-{\bar\zeta}$
one has
\begin{equation}
\label{eq:q=p'}
h_{p,p'}(\zeta,{\bar\zeta})=\sum_{k=1}^m\prod_{j\neq k}^m|\zeta-\lambda_j|^2=\sum_{k=1}^m|p_k(\zeta)|^2.
\end{equation}
Recalling  $
p'(\zeta)=\sum_{k=1}^sr_k
\phi_k(\zeta)$
one  obtains
$$
|\hat p'(z)|^2\leq \Big(\sum_{k=1}^sr_k^2\al_k^{-1}\Big)
\Big(\sum_{k=1}^s\al_k|\hat \phi_k(z)|^2\Big)=
\Big(\sum_{k=1}^sr_k^2\al_k^{-1}\Big)\hat h_{p,p'}(z,\bar z)
$$
which together with \eqref{eq:q=p'} proves Lemma \ref{lem:fourfour}.

\smallskip

Next we give a proof of Lemma \ref{lem:pq:kinji}.

\noindent
Proof of Lemma \ref{lem:pq:kinji}: 
Let $p$ be a monic hyperbolic polynomial and $q$ be a hyperbolic polynomial which separates $p$. To prove the first assertion it suffices to prove that
writing
\[
\big(1+\ep \dif/\dif \zeta\big)q=c\prod_{j=1}^{m-1}(\zeta-\mu_j(\ep)),\quad \big(1+\ep \dif/\dif \zeta\big)p=\prod_{j=1}^m(\zeta-\lambda_j(\ep))
\]
where $\lambda_1(\ep)\leq \cdots \leq \lambda_{m}(\ep)$ and $ \mu_1(\ep)\leq\cdots \leq \mu_{m-1}(\ep)$ one has 
\begin{equation}
\label{eq:bunri:1}
\lambda_1(\ep)\leq \mu_1(\ep)\leq \lambda_2(\ep)\leq \cdots\leq \mu_{m-1}(\ep)\leq \lambda_m(\ep)
\end{equation}
and  the multiplicity of multiple  roots decreases by one by this procedure for $\ep>0$. Let $p(\zeta)=\prod_{j=1}^m(\zeta-\lambda_j)$, $\lambda_1\leq \lambda_2\leq\cdots \leq \lambda_m$ and $q(\zeta)=c\prod_{j=1}^{m-1}(\zeta-\mu_j)$, $\mu_1\leq\mu_2\cdots\leq \mu_{m-1}$ and assume that
\[
\lambda_1\leq \mu_1\leq \lambda_2\leq \cdots \leq \mu_{m-1}\leq \lambda_m.
\]
Denote 
\begin{equation}
\label{eq:FtoG}
\begin{split}
F(\zeta)=\frac{(1+\ep\,d/d\zeta)p(\zeta)}{p(\zeta)}=1+\ep\,\sum_{j=1}^m\frac{1}{\zeta-\lambda_j},\\
G(\zeta)=\frac{(1+\ep\,d/d\zeta)q(\zeta)}{q(\zeta)}=1+\ep\,\sum_{j=1}^{m-1}\frac{1}{\zeta-\mu_j}.
\end{split}
\end{equation}
Noting that $dF/d\zeta$ is strictly negative on each interval not including $\lambda_k$ and $lim_{|\zeta|\to\infty}F=1$ there is a simple root of $(1+\ep\,\dif/\dif \zeta)p=0$ to the left of $\lambda_k$. This proves that the multiplicity of multiple  roots of $(1+\ep\,\dif/\dif \zeta)p=0$ decreases by one from that of $p$. The same for $q$.

Assume $(\lambda_k,\mu_k)\neq \emptyset$. Let $\zeta\in (\lambda_k,\mu_k)$ then $\zeta-\mu_k<0<\zeta-\lambda_{k}$ and hence
\[
\zeta-\lambda_1\geq \zeta-\mu_1\geq \cdots\geq \zeta-\lambda_k>0>\zeta-\mu_k\geq \zeta-\lambda_{k+1}\geq \cdots\geq \zeta-\lambda_m.
\]
This implies
\[
\sum_{j=k}^{m-1}\frac{1}{\zeta-\mu_j}\leq 
\sum_{j=k+1}^{m}\frac{1}{\zeta-\lambda_j},\qquad \sum_{j=1}^{k-1}\frac{1}{\zeta-\mu_j}\leq \sum_{j=2}^{k}\frac{1}{\zeta-\lambda_j}
\]
and hence
\begin{equation}
\label{eq:FlargeG}
G(\zeta)<G(\zeta)+\frac{1}{\zeta-\lambda_1}\leq F(\zeta),\quad \zeta\in (\lambda_k,\mu_k).
\end{equation}
Note that $\lambda_{k}\leq \lambda_{k+1}(\ep)\leq \lambda_{k+1}$ and $\mu_{k-1}\leq \mu_k(\ep)< \mu_k$. If $\lambda_{k+1}(\ep)\geq \mu_k$ then $\mu_k(\ep)< \lambda_{k+1}(\ep)$ is obvious. If $\lambda_k\leq \lambda_{k+1}(\ep)< \mu_k$ then thanks to \eqref{eq:FlargeG} one concludes $\mu_k(\ep)<\lambda_{k+1}(\ep)$. Therefore one has
\[
\mu_k(\ep)<\lambda_{k+1}(\ep).
\]
Next assume $(\mu_k,\lambda_{k+1})\neq \emptyset$. Let $\zeta\in (\mu_k,\lambda_{k+1})$ so that $\zeta-\lambda_{k+1}<0<\zeta-\mu_k$ and hence
\[
\zeta-\lambda_1\geq \zeta-\mu_1\geq \cdots\geq \zeta-\mu_k>0>\zeta-\lambda_{k+1}\geq \zeta-\mu_{k+1}\geq \cdots\geq \zeta-\lambda_m.
\]
This shows that
\[
\sum_{j=1}^{k}\frac{1}{\zeta-\lambda_j}\leq 
\sum_{j=1}^{k}\frac{1}{\zeta-\mu_j},\qquad \sum_{j=k+1}^{m-1}\frac{1}{\zeta-\lambda_j}\leq \sum_{j=k+1}^{m-1}\frac{1}{\zeta-\mu_j}
\]
and hence
\begin{equation}
\label{eq:GlargeF}
G(\zeta)>G(\zeta)+\frac{1}{\zeta-\lambda_m}\geq F(\zeta),\quad \zeta\in (\mu_k,\lambda_{k+1}).
\end{equation}
Note that $\mu_k\leq \mu_{k+1}(\ep)\leq \mu_{k+1}$ and $\lambda_{k+1}(\ep)<\lambda_{k+1}$. If $\mu_{k+1}(\ep)\geq \lambda_{k+1}$ then $\lambda_{k+1}(\ep)<\mu_{k+1}(\ep)$ is obvious. If $\mu_{k+1}(\ep)<\lambda_{k+1}$ then \eqref{eq:GlargeF} shows the same conclusion. Thus one has
\[
\lambda_{k+1}(\ep)<\mu_{k+1}(\ep).
\]
Repeating the same arguments  in $(-\infty,\lambda_1)$ one obtains $\lambda_1(\ep)<\mu_1(\ep)$. Then one concludes \eqref{eq:bunri:1} and hence assertion.
 
Turn to the second assertion which is found in \cite{Waka}. Write
\begin{equation}
\label{eq:ltol+1}
h_{\ell}(\zeta,\ep)=\frac{(1+\ep\,d/d\zeta)^{\ell}p(\zeta)}{(1+\ep\,d/d\zeta)^{\ell-1}p(\zeta)}=1+\ep\,\sum_{j=1}^m\frac{1}{\zeta-\lambda^{\ell}_j(\ep)},\;\;\ell=1,\ldots,m
\end{equation}
where $(1+\ep\,d/d\zeta)^{\ell-1}p(\zeta)=\prod_{j=1}^m\big(\zeta-\lambda_j^{\ell}(\ep)\big)$, $\lambda_1^{\ell}(\ep)\leq \lambda_2^{\ell}(\ep)\leq \cdots\leq \lambda_m^{\ell}(\ep)$.
Since $\lambda_1^{\ell}(\ep)$, $l\geq 2$, $\ep>0$ are simple roots it follows from \eqref{eq:ltol+1} that
\begin{align*}
&\lambda_1^{\ell+1}(\ep)\leq \lambda_1^{\ell}(\ep)\leq \lambda_2^{\ell+1}(\ep)\leq \cdots\leq \lambda_m^{\ell+1}(\ep)\leq \lambda_m^{\ell}(\ep),\\
&\lambda_1^{\ell}(\ep)<\lambda_2^{\ell}(\ep)<\cdots<\lambda_{\ell-1}^{\ell}(\ep)<\lambda_{\ell}^{\ell}(\ep)\leq \cdots\leq \lambda_m^{\ell}(\ep)
\end{align*}
where $\lambda_k^{\ell}(\ep)$, $1\leq k\leq \ell-1$ are simple roots. Assume that there is $c_{\ell}>0$ such that
\begin{equation}
\label{eq:kinou}
\lambda_k^{\ell}(\ep)-\lambda_{k-1}^{\ell}(\ep)\geq c_{\ell}\,\ep,\quad k=2,\ldots,\ell.
\end{equation}
It is easy to see that \eqref{eq:kinou} holds for $\ell=2$ with $c_2=1$. It follows from \eqref{eq:ltol+1} that
\[
h_{\ell}(\lambda_k^{\ell}(\ep)-\delta\ep,\ep))\leq 1+\frac{\ep(k-1)}{\lambda_k^{\ell}(\ep)-\delta\ep-\lambda_{k-1}^{\ell}(\ep)}-\frac{1}{\delta}\leq 1+\frac{k-1}{c_{\ell}-\delta}-\frac{1}{\delta}
\]
for $2\leq k\leq \ell$, $0<\delta\leq c_{\ell}$. 
Therefore choosing $\delta=\big(k+c_{\ell}-\sqrt{(k+c_{\ell})^2-4c_{\ell}}\big)/2$ one has $h_{\ell}(\lambda_k^{\ell}(\ep)-\delta\ep,\ep)\leq 0$ and hence $\lambda_k^{\ell+1}(\ep)\leq \lambda_k^{\ell}(\ep)-\delta\ep$. Then taking
\[
c_{\ell+1}=\min_{2\leq k\leq \ell}{\big(k+c_{\ell}-\sqrt{(k+c_{\ell})^2-4c_{\ell}}\big)/2}>0
\]
one has $\lambda_{k+1}^{\ell+1}(\ep)-\lambda_k^{\ell+1}(\ep)=\lambda_{k+1}^{\ell+1}(\ep)-\lambda_k^{\ell}(\ep)+\lambda_k^{\ell}(\ep)-\lambda_k^{\ell+1}(\ep)\geq \lambda_k^{\ell}(\ep)-\lambda_k^{\ell+1}(\ep)\geq c_{\ell+1}\,\ep$ for $k=1,\ldots,\ell$. Thus \eqref{eq:ltol+1} holds for $\ell=m$ by induction.
 \qed

%
%

\end{document}